\renewcommand{\le}{\leqslant}
\renewcommand{\leq}{\leqslant}
\renewcommand{\ge}{\geqslant}
\renewcommand{\geq}{\geqslant}
\newcommand{\al}{\alpha}
\newcommand{\N}{{\mathbb N}}
\newcommand{\R}{{\mathbb R}}
\newcommand{\Fin}{\mathfrak{Fin}}
\newcommand{\I}{\mathfrak{I}}
\theoremstyle{plain}
\newtheorem{theorem}{Theorem}
\newtheorem{lemma}[theorem]{Lemma}
\theoremstyle{remark}
\theoremstyle{definition}
\numberwithin{equation}{section}
\begin{document}
	\title[On ideal generated by modulus function]{On relation between statistical ideal and ideal generated by a modulus function}
	\author{Dmytro Seliutin}
	\address{School of Mathematics and Informatics V.N. Karazin Kharkiv  National University,  61022 Kharkiv, Ukraine}
	\email{selyutind1996@gmail.com}
	\thanks{ The research  was supported by the National Research Foundation of Ukraine funded by Ukrainian state budget in frames of project 2020.02/0096 ``Operators in infinite-dimensional spaces:  the interplay between geometry, algebra and topology''}
	\subjclass[2020]{40A35; 54A20}
	\keywords{Ideal, statistical ideal, modulus function}

\begin{abstract}
We characterize those modulus functions $f$ for which the ideal generated by $f$ is equal to the statistical ideal.
\end{abstract}
	
	\maketitle
	
	\section{Introduction}
	
	Let $\Omega$ be a non-empty set. Let us remind that a non-empty family $\I \subset 2^{\Omega}$ is called \textit{an ideal} on $\Omega$ if $\I$ satisfies:
	\begin{enumerate}
	    \item $\Omega \notin \I$;
	    \item if $A, B \in \I$ then $A \cup B \in \I$;
	    \item if $A \in \I$ and $D \subset A$ then $D \in \I$.
	\end{enumerate}
	
In our article we consider those ideals $\I$ which contain the family of finite sets $\Fin$.
	
For a subset $A \subset \N$ denote $\al_A(n) := |A \cap [1,n]|$, where $|M|$ stands for a number of elements in the set $M \subset \N$.  Let $A \subset \N$. \textit{The natural density} of $A$ is $\displaystyle d(A) := \lim\limits_{n \rightarrow \infty} \frac{\al_A(n)}{n}$.
	
	The ideal of sets $A \subset \N$ having $d(A) = 0$ is called \textit{the statistical ideal}. We denote this ideal $\I_s$.
	
	The statistical ideal is related to the statistical convergence and is a very popular branch of research.
	
	In \cite{aizpuru2014} authors introduced a generalization of the natural density of subset in $\N$. They called it \textit{$f$-density}, where $f$ is a modulus function.
	
	Recall that a function $f: \R^+ \rightarrow \R^+$ is called \textit{an unbounded modulus function} (modulus function for short) if:
		\begin{enumerate}
			\item $f(x) = 0$ if and only if $x = 0$;
			\item $f(x + y) \leq f(x) + f(y)$ for all $x, y \in \R^+$;
			\item $f(x) \le f(y)$  if $x \le y$;
			\item $f$ is continuous from the right at 0;
			\item $\lim\limits_{n \rightarrow \infty} f(n) = \infty$.
		\end{enumerate}

		Let $f$ be a modulus function. The quantity  $\displaystyle d_f(A) := \lim\limits_{n \rightarrow \infty} \frac{f(\al_A(n))}{f(n)}$ is called \textit{the $f$-density} of $A \subset \N$. The ideal $\I_f := \{A \subset \N:\ d_f(A) = 0\}$ is called \textit{the $f$-ideal}. $\I_f$ appears implicitly in \cite{aizpuru2014} where the convergence of sequences with respect to $\I_f$ was studied, and appears explicitly in \cite{kad-sel}.
		
In \cite[p. 527]{aizpuru2014} it is noted that for an arbitrary modulus function $f$ and $A \subset \N$ if  $d_f(A) = 0$ then $d(A) = 0$. In other words, $\I_f \subset \I_s$. The ideals $\I_f$ and $\I_s$ and the corresponding ideal convergences have some similarities and some differences. The aim of the paper is to present a complete description of those modulus functions $f$ for which $\I_f = \I_s$.  We do this in Theorem \ref{main_theorem}. After that in Theorem \ref{easier} we give a handy sufficient condition for the equality $\I_s = \I_f$, and finally we present some illustrative examples.

	\section{Main results}
	
Let $f$ be a modulus function, $t \in [1, +\infty)$, $k \in \N$. Denote 
	$$
	h_f(t) := \displaystyle\limsup\limits_{n \rightarrow \infty} \frac{f(n)}{f(tn)}, \quad g_f(k) := h_f(2^k) = \displaystyle\limsup\limits_{n \rightarrow \infty} \frac{f(n)}{f(2^k n)}.
	$$
	
The following Theorem is the promised main result of our paper. 
	
\begin{theorem} \label{main_theorem}
Let $f$ be a modulus. The following statements are equivalent:
	\begin{enumerate}
	    \item $\I_s = \I_f$;
	    \item $\lim\limits_{k \rightarrow \infty} g_f(k) = 0$.
	    \item $\lim\limits_{t \rightarrow \infty} h_f(t) = 0$.
\end{enumerate}
\end{theorem}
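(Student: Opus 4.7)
My plan is to first dispatch $(2)\Leftrightarrow(3)$ as an easy exercise using monotonicity of $f$: since $g_f(k) = h_f(2^k)$, the direction $(3)\Rightarrow(2)$ is immediate, and for any $t \ge 2^k$ we have $f(tn) \ge f(2^k n)$, which yields $h_f(t) \le g_f(k)$ and hence $(2)\Rightarrow(3)$. I will also observe that $g_f$ is non-increasing in $k$, so condition $(2)$ is equivalent to $\inf_k g_f(k) = 0$. The substantive content is then the equivalence $(1)\Leftrightarrow(2)$, which, since $\I_f \subseteq \I_s$ is already known, reduces to $\I_s \subseteq \I_f$.

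For $(2)\Rightarrow(1)$, given $A \in \I_s$ and $\eps > 0$ I plan to pick $k$ with $g_f(k) < \eps$ and exploit that $\alpha_A(n) \le n/2^k$ eventually (from $d(A) = 0$), which by monotonicity of $f$ implies $f(\alpha_A(n))/f(n) \le f(\alpha_A(n))/f(2^k \alpha_A(n))$. Splitting on whether $\alpha_A(n) \to \infty$ or $\alpha_A(n)$ stays bounded, both cases give $\limsup f(\alpha_A(n))/f(n) \le 2\eps$, so $d_f(A) = 0$.

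For $(1)\Rightarrow(2)$ I argue by contrapositive: assuming $\inf_k g_f(k) = c > 0$, I must exhibit a set $A \in \I_s \setminus \I_f$. For each $j$ the definition of $\limsup$ furnishes arbitrarily large $n_j$ with $f(n_j)/f(2^j n_j) \ge c/2$; I set $N_j := 2^j n_j$, taking $n_j$ so large that also $N_j > 2 N_{j-1}$ and $n_j > 2 n_{j-1}$, and I place exactly $n_j - n_{j-1}$ new elements of $A$ inside $(N_{j-1}, N_j]$. Then $\alpha_A(N_j) = n_j$ and hence $f(\alpha_A(N_j))/f(N_j) \ge c/2$ infinitely often, preventing $A$ from belonging to $\I_f$.

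The main technical obstacle is keeping $d(A) = 0$ under this construction: placing the new elements consecutively right after $N_{j-1}$ would let $\alpha_A(n)/n$ spike arbitrarily close to $1$. I therefore plan to distribute them at equal spacing within $(N_{j-1}, N_j]$, which yields an estimate of the form $\alpha_A(n)/n \le n_{j-1}/N_{j-1} + O(2^{-j})$ throughout the interval, so $\alpha_A(n)/n \to 0$. Once this spreading detail is handled, verification that $A$ witnesses $\I_s \ne \I_f$ is routine, and the contradiction with $(1)$ closes the argument.
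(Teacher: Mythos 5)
Your proposal is correct and follows essentially the same route as the paper: the same reduction to $\I_s \subset \I_f$, the same monotonicity argument for $(2)\Leftrightarrow(3)$, the same use of $\al_A(n) \le n/2^k$ plus monotonicity of $f$ for the forward direction, and the same block construction with $\al_A(2^j n_j) = n_j$ for the contrapositive of $(1)\Rightarrow(2)$. The only divergence is cosmetic: where you spread the $n_j - n_{j-1}$ new elements evenly over $(N_{j-1}, N_j]$ to keep $\al_A(n)/n$ small, the paper places them consecutively at the top of the block just below $N_j$, which achieves the same bound $\al_A(n)/n \lesssim 2^{-j}$ with a slightly shorter verification; both placements are valid.
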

	
	\begin{proof} Remark that the equivalence of conditions (2) and (3) follows evidently from the monotonicity of $h_f(t)$ in the variable $t$. We include both of the conditions because of some conveniences in the proof and for the future applications. So, it is sufficient to demonstrate implications  $(3) \Rightarrow (1)$ and $(1) \Rightarrow (2)$.

$\mathbf{(3) \Rightarrow (1)}$. As we remarked in the Introduction, the inclusion $\I_s \supset \I_f$ is known, so we need to show that $\I_s \subset \I_f$. Denote $\displaystyle \delta^f_t := h_f(t) + \frac{1}{t}$. The quantity $\delta^f_t$ is  decreasing in $t$ and $\lim\limits_{t \rightarrow \infty} \delta^f_t = 0$. We know that $\displaystyle\limsup\limits_{n \rightarrow \infty} \frac{f(n)}{f(t n)} < \delta^f_t$ for all $t \in [1, +\infty)$, in particular we have that for every $k \in \N$ there exists $N_1(k) \in \N$ such that for all $n \geq N_1(k)$ the following holds true: $\displaystyle f\left(\frac{n}{k}\right) < \delta^f_k f(n)$.
	
	Let $A \in \I_s$. By the definition of $\I_s$ his means that $\displaystyle\lim\limits_{n \rightarrow \infty} \frac{\al_A(n)}{n} = 0$. In other words,  for each $k \in \N$ there exists $N_2(k) \in \N$ such that $\displaystyle\al_A(n) < \frac{n}{k}$ for each $n > N_2(k)$. Denote $\displaystyle{N_k} := \max \left\{N_1(k), N_2\left(k\right)\right\}$. Then for each $n > {N_k}$ 
	$$\displaystyle f(\al_A(n)) < f \left(\frac{n}{k}\right) < \delta^f_k  f(n).
	$$
	From the previous inequality we have:
	$$
	\limsup\limits_{n \rightarrow \infty} \frac{f(\al_A(n))}{f(n)} \leq \delta^f_k \underset{k \rightarrow \infty}{\longrightarrow} 0,
	$$
which completes the proof of the implication  $(3) \Rightarrow (1)$.

\smallskip	
$\mathbf{(1) \Rightarrow (2)}$.  Assume that $\lim\limits_{k \rightarrow 0} g_f(k) \neq 0$.  By monotonicity, this implies the existence of $\xi > 0$ such that $g_f(k) > \xi$  for every $k \in \N$ . Consequently, $\displaystyle\limsup\limits_{n \rightarrow \infty} \frac{f(n)}{f(2^k n)} > \xi$ for each $k \in \N$. Then for every $k \in \N$ there exists an infinite subset $N_k \subset \N$ such that for each $n \in N_k$ \begin{equation}\label{main}
			f(n) > \xi f(2^k n).
\end{equation}
	Choose $0 = n_0 < n_1 < n_2 < n_3 < ...$ such that  $n_j \in N_j$ for each $j \in \N$. So for each $k \in \N$ 
$$
f(n_k) > \xi f(2^k n_k).
$$
	
Denote $m_k := 2^kn_k$, $k = 1, 2, \ldots$ . Let us consider the following set $A$:
\begin{align*}
A &= \{m_1 - n_1 + 1, m_1 - n_1 + 2, ..., m_1 - 1,\\
&m_2 - n_2 + n_1, m_2 - n_2 + n_1 + 1, m_2 - n_2 + n_1 + 2,... ,m_2 - 1,...\},
\end{align*}
that is from each block of naturals in $(m_{k-1}, m_k]$ we chose the $n_{k} - n_{k-1}$ biggest ones. For the correctness of the definition of $A$ we have to check that $m_k - m_{k-1}  >  n_k -  n_{k-1}$ for every $k \in \N$. Indeed, for all $k \in \N$ we have:
\begin{align*}
m_k - n_k + n_{k-1} - m_{k-1} &= 2^kn_k - n_k + n_{k-1} - 2^{k-1} n_{k-1} =\\
&= n_k (2^k - 1) - n_{k-1} (2^{k-1} - 1) > 0,
\end{align*}
because $n_k > n_{k-1}$ and $2^k - 1 > 2^{k-1} - 1$. 

Denote $\al_n := \al_A(n)$. Let us show that $A \notin \I_f$. By our construction, $\al_{m_k} = n_k$ for all $k \in \N$. Using the inequality (\ref{main}) we obtain that $\displaystyle\frac{f(\al_{m_k})}{f(m_k)} > \xi > 0$, so $\displaystyle\frac{f(\al_n)}{f(n)} \not\rightarrow 0$ as $n \rightarrow \infty$, that's why $A \notin \I_f$.\\
		
Let us finally show that $A \in \I_s$. For every $k \in \N$ we can split the block of naturals $[m_k + 1, m_{k+1}] \cap \N$ as follows: 
\begin{align*}
[m_k + 1, m_{k+1}] \cap \N &= [m_k + 1, m_{k+1} - n_{k+1} + n_k - 1] \cap \N  \\
&\cup [ m_{k+1} - n_{k+1} + n_k, m_{k+1} - 1] \cap \N \cup \{m_{k+1}\}.
\end{align*}
On the initial part of this set for $j \in [m_k + 1, m_{k+1} - n_{k+1} + n_k - 1]$ we have: $\al_j = n_k$ and $\displaystyle\frac{\al_j}{j} = \frac{n_k}{j} \leq \frac{n_k}{m_k + 1} \leq \frac{n_k}{m_k} = \frac{1}{2^k}$.
On the next part, for $j \in [ m_{k+1} - n_{k+1} + n_k, m_{k+1} - 1] = [n_{k+1}(2^{k+1} - 1) + n_k, 2^{k+1}n_{k+1}]$ we have $\al_j = n_k + x_j$, where $1 \leq x_j \leq n_{k+1} - n_k$. Using this, we obtain that $\displaystyle\frac{1}{j} \leq \frac{x_j}{j} \leq \frac{n_{k+1} - n_k}{j}$, and so 
\begin{align*}
\frac{\al_j}{j} = \frac{n_k + x_j}{j} \leq \frac{n_{k+1}}{n_{k+1}(2^{k+1} - 1) + n_k} \leq \frac{n_{k+1}}{n_{k+1}(2^{k+1} - 1)} = \frac{1}{2^{k+1} - 1} < \frac{1}{2^k}.
\end{align*}
At the last point $j  = m_{k+1}$ we have: $\al_j = n_{k+1}$ and $$\frac{\al_j}{j} = \frac{n_{k+1}}{m_{k+1}} = \frac{1}{2^{k+1}} < \frac{1}{2^k}.$$
So for an arbitrary $k \in \N$ and for $j \in [m_k + 1, m_{k + 1}]$ we have $\displaystyle\frac{\al_j}{j} < \frac{1}{2^k}$, in other words $A \in \I_s$.
\end{proof}
	
	Now let us discuss a particular case of Theorem \ref{main_theorem} in which the condition for $\I_f =\I_s$ can be substantially simplified. First, a simple technical lemma.
	
\begin{lemma} \label{lemma}
Let $f$ be a modulus function. Let there exists $\displaystyle \lim\limits_{n \rightarrow \infty} \frac{f(n)}{f(2n)} = a$. Then $g_f(k) =\displaystyle \lim\limits_{n \rightarrow \infty} \frac{f(n)}{f(2^kn)} = a^k$ for all $k \in \N$.
\end{lemma}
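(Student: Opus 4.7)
The plan is a short induction on $k$, based on the telescoping identity
\[
\frac{f(n)}{f(2^{k+1} n)} \;=\; \frac{f(n)}{f(2^{k} n)} \cdot \frac{f(2^{k} n)}{f(2^{k+1} n)}.
\]
The base case $k=1$ is exactly the hypothesis of the lemma. For the inductive step, the first factor on the right tends to $a^k$ by the inductive hypothesis. For the second factor, I would substitute $m := 2^{k} n$; since $m \to \infty$ as $n \to \infty$, the hypothesis yields $f(m)/f(2m) \to a$, so this factor also tends to $a$. The product of two convergent sequences converges to the product of the limits, so the left-hand side tends to $a^{k+1}$. In particular a genuine limit exists, so the $\limsup$ appearing in the definition of $g_f(k+1)$ coincides with it, giving $g_f(k+1) = a^{k+1}$.

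There is essentially no obstacle: the argument is a finite telescoping product, and monotonicity of $f$ ensures every ratio $f(n)/f(2n)$ lies in $[0,1]$, so $a \in [0,1]$ and all quantities are well-defined positive reals for $n \geq 1$. The only point worth noting explicitly is that the hypothesis gives convergence along the full sequence of positive integers, so passing to the subsequence indexed by $2^{k} n$ preserves the limit $a$ without any further argument.
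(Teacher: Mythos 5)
Your proof is correct and follows essentially the same route as the paper's: an induction on $k$ via the telescoping factorization $\frac{f(n)}{f(2^{k+1}n)} = \frac{f(n)}{f(2^k n)}\cdot\frac{f(2^k n)}{f(2^{k+1}n)}$ and the product rule for limits. You are in fact slightly more careful than the paper, since you explicitly justify that the second factor converges to $a$ by passing to the subsequence $m = 2^k n$.
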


\begin{proof}
Let us use method of mathematical induction.

The base of induction: $k = 2$.
$$\displaystyle \lim\limits_{n \rightarrow \infty} \frac{f(n)}{f(4n)} = \displaystyle \lim\limits_{n \rightarrow \infty} \frac{f(n) \cdot f(2n)}{f(2n) \cdot f(4n)} = \displaystyle \lim\limits_{n \rightarrow \infty} \frac{f(n)}{f(2n)} \cdot \displaystyle \lim\limits_{n \rightarrow \infty} \frac{f(2n)}{f(4n)} = a^2.
$$

The inductive transition: $k \rightarrow k+1$.

\smallskip
\noindent$\displaystyle \lim\limits_{n \rightarrow \infty} \frac{f(n)}{f(2^{k+1}n)} = \displaystyle \lim\limits_{n \rightarrow \infty} \frac{f(n)}{f(2^kn)} \cdot \displaystyle \lim\limits_{n \rightarrow \infty} \frac{f(2^k n)}{f(2^{k+1}n)} = a^k \cdot a = a^{k+1}.$
\end{proof}

%
	
\begin{theorem} \label{easier}
Let $f$ be a modulus. Suppose that there exists $\displaystyle \lim\limits_{n \rightarrow \infty} \frac{f(n)}{f(2n)}$. Then the following statements are equivalent:
\begin{enumerate}
\item $\I_s = \I_f$;
\item $\displaystyle\lim\limits_{n \rightarrow \infty} \frac{f(n)}{f(2n)} < 1$.
\end{enumerate}
\end{theorem}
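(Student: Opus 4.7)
The plan is to deduce Theorem \ref{easier} directly from Theorem \ref{main_theorem} together with Lemma \ref{lemma}, using only one extra observation about the range of the limit $a := \lim_{n\to\infty} f(n)/f(2n)$. Specifically, I would argue as follows. By Theorem \ref{main_theorem}, the equality $\I_s = \I_f$ is equivalent to $\lim_{k\to\infty} g_f(k) = 0$, so the whole task reduces to translating the latter condition under the assumption that $a$ exists.

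By Lemma \ref{lemma}, under this assumption the sequence $g_f(k)$ is simply the geometric progression $a^k$. Hence condition (2) of Theorem \ref{main_theorem} becomes $\lim_{k\to\infty} a^k = 0$, and the whole claim boils down to recognizing when the geometric sequence $a^k$ tends to $0$.

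The next (and only) step is to pin down the range of $a$. Since $f$ is non-decreasing and non-negative, one has $0 \le f(n) \le f(2n)$ for all $n$, which gives $a \in [0,1]$ immediately. Then $a^k \to 0$ precisely when $a < 1$: for $a = 1$ the sequence is constantly $1$, while for $a \in [0,1)$ standard geometric-series behaviour yields $a^k \to 0$. Combined with the previous step, this gives the asserted equivalence $\I_s = \I_f \iff a < 1$.

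There is really no hard step here: the substance of the theorem is already carried by the main theorem and the lemma, and the present statement is essentially a clean rephrasing in the presence of a genuine limit. The only point that requires a sentence of justification is the implicit bound $a \le 1$ coming from monotonicity of $f$, which is what makes the strict inequality $a < 1$ the correct borderline condition rather than a two-sided constraint.
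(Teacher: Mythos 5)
Your proposal is correct and follows exactly the paper's route: the paper likewise deduces Theorem \ref{easier} by combining Theorem \ref{main_theorem} with Lemma \ref{lemma}, observing that $g_f(k)=a^k$ and that this tends to $0$ precisely when $a<1$. Your explicit remark that monotonicity of $f$ forces $a\in[0,1]$ is left implicit in the paper, but it is the same argument.
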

\begin{proof}
Under the assumption of existence of $\displaystyle \lim\limits_{n \rightarrow \infty} \frac{f(n)}{f(2n)}$, Lemma \ref{lemma} gives the equivalence of our condition (2) and the condition (2) of Theorem \ref{main_theorem}.
\end{proof}

\section{Examples}
	
At first, let us show that among the very elementary modulus functions $f$ the both possibilities $\I_f = \I_s$ and $\I_f \neq \I_s$ may happen.

\smallskip	
\noindent \textbf{Example 1}. $f(x) = x^p$, $p \in (0,1]$. For this kind of functions $\I_f = \I_s$. Indeed, $\displaystyle\lim\limits_{n \rightarrow \infty} \frac{f(n)}{f(2n)} = \displaystyle\lim\limits_{n \rightarrow \infty} \frac{n^p}{(2n)^p} = \left(\frac{1}{2}\right)^p < 1$.

\smallskip	
\noindent \textbf{Example 2}. $f(x) = \log (1+x)$. In this case $\I_f \neq \I_s$, because $\displaystyle\lim\limits_{n \rightarrow \infty} \frac{f(n)}{f(2n)} = \displaystyle\lim\limits_{n \rightarrow \infty} \frac{\log(1+n)}{\log(1+2n)} = 1$.

\smallskip	
Our next goal is to show that Theorem \ref{main_theorem} does not reduce to its particular case given in Theorem \ref{easier}, i.e. that there is a modulus functions $f$ for which the limit of $\displaystyle \frac{f(n)}{f(2n)}$ does not exist.

\smallskip	
\noindent \textbf{Example 3}. Put $f(0) = 0$, $f(1) = 1$, $f(2) = 2$. The values of $f$ in the remaining natural numbers we define recurrently: if for some $n \in \N$ the values $f(k)$ are already defined for  $k \in [1, 2^n]$, we define $f(k)$ for  $k = 2^n + \al \in [2^n + 1, 2^{n+1}]$, $\al \in [1, 2^n]$, by means of the formula 
\begin{equation}\label{def_f}
f(2^n + \al) = 
\begin{cases}
f(2^n), \text{if} \ n \in \{1, 3, 5,..\} \\ f(2^n) +f(\al), \text{if}\ n \in \{2, 4, 6,...\}.
\end{cases}
\end{equation}
This defines $f(n)$ for each $n \in \N \cup \{0\}$. In the intermediate points let us define $f$ by means of linear interpolation. Such $f$ is defined for each $x \in \R^+$, is monotonic, continuous, and $f(2^k) := 2^{\lceil \frac{k}{2} \rceil}$ for $k \in \N \cup \{0\}$. 

Let us verify that $f$ is a modulus function. For every $w, z \in \N$ (without lost of generality we consider $w > z$) we have to demonstrate that 
	    \begin{equation}\label{w + z}
	    	f(w + z) \leq f(w) + f(z).
	    \end{equation}
This can be done by induction in $n$, where $n$ is the smallest natural for which $w + z \leq 2^n$.

The base $n = 1$ is straightforward. Suppose now that we already proved \eqref{w + z} for $0 \leq w + z \leq 2^n$ and let us prove it for $2^n < w + z \leq 2^{n+1}$. Denote $w + z = 2^n + \al$, where $\al \in [1, 2^n]$.
\begin{enumerate}
\item Let $n$ be an odd number.  It is clear that there are numbers $\tilde{w}, \tilde{z} \in \N$, $\tilde{w} < w$,  $\tilde{z} < z$ such that $\tilde{w} + \tilde{z} = 2^n$. Then $f(w + z) = f(2^n) = f(\tilde{w} + \tilde{z}) \leq f(\tilde{w}) + f(\tilde{z}) \leq f(w) + f(z)$.
\item Let $n$ be an even  number. Then $f(w + z) = f(2^n + \al) = f(2^n) + f(\al)$.
\begin{enumerate}
\item Let $w \ge 2^n$, then $z \le \al$.  Represent $w$ in the form of $w = 2^n + \beta$. In this case $f(w + z) = f(2^n) + f(\al)$ and $f(w) = f(2^n) + f(\beta)$. Then the inequality (\ref{w + z}) rewrites as $f(\al) \leq f(z) + f(\beta)$ which is true by the inductive assumption.
\item Let $w < 2^n$, which means that  $2^{n-1} < w < 2^n$ and $z > \al$.  Then $f(w) = f(2^{n-1}) = f(2^{n})$, because $n-1$ is odd. Again, in this case the inequality (\ref{w + z}) is equivalent to a simpler one: $f(\al) \leq f(w)$ which is true since $z > \al$.
\end{enumerate}
\end{enumerate}
 So, we proved that the function, defined by (\ref{def_f}) is a modulus function. Consider now the sequence $\displaystyle\frac{f(2^n)}{f(2^{n+1})},\ n = 0, 1, 2, ...$. When $n$ is odd we have $\displaystyle\frac{f(2^n)}{f(2^{n+1})} = 1$ and if $n$ is even we have $\displaystyle\frac{f(2^n)}{f(2^{n+1})} = \frac{1}{2}$. This means that the sequence $\displaystyle\frac{f(n)}{f(2n)}$ has no limit. 
 
By the way, in this example $g_f(k) = \displaystyle\limsup\limits_{n \rightarrow \infty} \frac{f(n)}{f(2^k n)} = \frac{1}{2^{k -1}}$,
so $\I_f = \I_s$.
	
	\vspace{8mm}
	
	\textbf{Acknowledgment.} The author is thankful to his parents for their support and his scientific adviser, professor Vladimir Kadets for his constant help with this project.

\end{document}